\newcommand{\R}{\mathbb{R}}
\newcommand{\N}{\mathbb{N}}
\newcommand{\E}{\mathbb{E}}
\newcommand{\cW}{\mathcal{W}}
\newcommand{\cP}{\mathcal{P}}
\newcommand{\cN}{\mathcal{N}}
\newcommand{\cS}{{\mathcal S}}
\newcommand{\cO}{{\mathcal O}}
\newcommand{\tr}{{\rm tr}}
\newcommand{\BW}{{\bf bw}}
\renewcommand{\epsilon}{\varepsilon}
\theoremstyle{plain}
\newtheorem{prooff}{Proof}
\newtheorem{theorem}[prooff]{Theorem}
\newtheorem{corollary}[prooff]{Corollary}
\theoremstyle{definition}
\newtheorem{remark}[prooff]{Remark}
\newtheorem{definition}[prooff]{Definition}
\begin{document}

\title[Quadratic Wasserstein distance between Gaussian laws]{Quadratic Wasserstein distance between Gaussian laws revisited with correlation}

\author{Aur\'elien Alfonsi and Benjamin Jourdain
}
\thanks{CERMICS, ENPC, Institut Polytechnique de Paris, INRIA, Marne-la-Vall\'ee, France. E-mails: aurelien.alfonsi@enpc.fr, benjamin.jourdain@enpc.fr. This research benefited from the support of the chair Risques Financiers, Fondation du Risque.}
\maketitle
\begin{abstract} 
  In this note, we give a simple derivation of the formula obtained in \cite{Dowlan,GiSho,Olpuk} for the quadratic Wasserstein distance between two Gaussian distributions on $\R^d$ with respective covariance matrices $\Sigma_\mu$ and $\Sigma_\nu$. This derivation relies on the existence of an orthogonal matrix $O$ such that $O^*\Sigma_\mu O$ and $O^*\Sigma_\nu O$ share the same correlation matrix and on the simplicity of optimal couplings in the case with the same correlation matrix and therefore the same copula.

{\bf Keywords:} Optimal transport, Projection, Bures-Wasserstein distance.\end{abstract}
Let $\cP_2(\R^d)$ denote the set of probability measures on $\R^d$ with finite second order moment, $\mu,\nu\in\cP_2(\R^d)$ with expectations $m_\mu,m_\nu\in\R^d$ and covariance matrices $\Sigma_\mu,\Sigma_\nu$. The set of covariance matrices i.e. the subset of $\R^{d\times d}$ consisting of symmetric positive semi-definite matrices is denoted by $\cS^+_d$. The quadratic Wasserstein distance between $\mu$ and $\nu$ is defined by
$$\cW_2(\mu,\nu)=\left(\inf_{\pi\in\Pi(\mu,\nu)}\int_{\R^d\times \R^d}|x-y|^2\pi(dx,dy)\right)^{1/2},$$
where $\Pi(\mu,\nu)$ denotes the subset of $\cP_2(\R^d\times\R^d)$ consisting of probability measures with respective marginals $\mu$ and $\nu$. The covariance matrix of $\pi\in\Pi(\mu,\nu)$ writes $\left(\begin{array}{cc}\Sigma_\mu & \Theta_\pi\\\Theta_\pi^* & \Sigma_{\nu}\end{array}\right)\in\cS^+_{2d}$ for some $\Theta_\pi\in\R^{d\times d}$ and, by bias variance decomposition,
\begin{align}
  \int_{\R^d\times \R^d}&|x-y|^2\pi(dx,dy)=|m_\mu-m_\nu|^2+\int_{\R^d\times \R^d}|(x-m_\mu)-(y-m_\nu)|^2\pi(dx,dy)\notag\\&=|m_\mu-m_\nu|^2+\int_{\R^d\times \R^d}|x-m_\mu|^2+|y-m_\nu|^2-2(x-m_\mu)^*(y-m_\nu)\pi(dx,dy)\notag\\&=|m_\mu-m_\nu|^2+\tr(\Sigma_\mu+\Sigma_\nu-2\Theta_\pi),\label{w22}\end{align}
where the transpose of  $M\in\R^{d_1\times d_2}$ is denoted by $M^*\in\R^{d_2\times d_1}$ for $d_1,d_2\in \N^*$.
As a consequence, minimizing $\int_{\R^d\times \R^d}|x-y|^2\pi(dx,dy)$ with respect to $\pi\in\Pi(\mu,\nu)$ amounts to maximizing $\tr(\Theta_\pi)$. 
For each $\Theta\in\cS^+_d$ such that $\Gamma_\Theta:=\left(\begin{array}{cc}\Sigma_\mu & \Theta\\\Theta^* & \Sigma_{\nu}\end{array}\right)\in\cS_{2d}^+$, $\cN_{2d}\left(\left(\begin{array}{c}m_\mu\\ m_\nu\end{array}\right),\Gamma_\Theta\right)$ belongs to  $\Pi\left(\cN_d(m_\mu,\Sigma_\mu),\cN_d(m_\nu,\Sigma_\nu)\right)$ so that 
\begin{align*}
  \cW_2^2\left(\cN_d(m_\mu,\Sigma_\mu),\cN_d(m_\nu,\Sigma_\nu)\right)&=|m_\mu-m_\nu|^2+\tr(\Sigma_\mu+\Sigma_\nu)-2V(\Sigma_\mu,\Sigma_\nu)
\end{align*}
where
\begin{equation}
   V(\Sigma_\mu,\Sigma_\nu)=\sup_{\Theta\in\R^{d\times d}:\Gamma_\Theta\in\cS^+_{2d}}\tr(\Theta)\;\mbox{ with }\;\Gamma_\Theta:=\left(\begin{array}{cc}\Sigma_\mu & \Theta\\\Theta^* & \Sigma_{\nu}\end{array}\right)\label{defv}.
\end{equation}
Since $\{\Theta_\pi\in\R^{d\times d}:\pi\mbox{ coupling between }\mu\mbox{ and }\nu\}\subset\{\Theta\in\R^{d\times d}:\Gamma_\Theta\in\cS^+_d\}$, one also has
\begin{equation*}
   \cW_2^2(\mu,\nu)\ge |m_\mu-m_\nu|^2+\tr(\Sigma_\mu+\Sigma_\nu)-2V(\Sigma_\mu,\Sigma_\nu).
\end{equation*}The maximization problem defining $V(\Sigma_\mu,\Sigma_\nu)$ was solved by \cite{Dowlan,GiSho,Olpuk} who derived
\begin{equation}
   V(\Sigma_\mu,\Sigma_\nu)=\tr\left((\Sigma_\mu^{1/2}\Sigma_\nu\Sigma_\mu^{1/2})^{1/2}\right),\label{formv}
\end{equation}
where, for $\Sigma\in \cS^+_d$, $\Sigma^{1/2}$ denotes the symmetric square root of $\Sigma$ i.e. the unique element of $\cS^+_d$ such that $\Sigma^{1/2}\Sigma^{1/2}=\Sigma$. When $\Sigma$ is positive definite, we denote by $\Sigma^{-1/2}$ the matrix $(\Sigma^{-1})^{1/2}$ where $\Sigma^{-1}$ stands for the inverse of $\Sigma$. When $\Sigma_\mu$ is positive definite (resp. $\Sigma_\nu$ is positive definite), according to \cite{Olpuk}, the optimal $\Theta$ in \eqref{defv} is unique and equal to $\Sigma_\mu^{1/2}(\Sigma_\mu^{1/2}\Sigma_\nu\Sigma_\mu^{1/2})^{1/2}\Sigma_\mu^{-1/2}$ (resp. $\Sigma_\nu^{-1/2}(\Sigma_\nu^{1/2}\Sigma_\mu\Sigma_\nu^{1/2})^{1/2}\Sigma_\nu^{1/2}$). This is also a consequence of \cite{Gi}, according to which, when $\mu$ is absolutely continuous with respect to the Lebesgue measure then for each $\nu\in\cP_2(\R^d)$, there exists a unique $\cW_2$-optimal $\pi\in\Pi(\mu,\nu)$ and this coupling is given by a map $T$ (i.e. $\pi=(x,T(x))\#\mu(dx)$)  
and of Theorem 3.2.9 \cite{RaRu98} which ensures that the gradient of any convex function $u:\R^d\to\R$ is an optimal map. 
Indeed, the function $\R^d\ni x\mapsto T(x)=\Sigma_\mu^{-1/2}(\Sigma_\mu^{1/2}\Sigma_\nu\Sigma_\mu^{1/2})^{1/2}\Sigma_\mu^{-1/2}x$ is the gradient of the convex function $u(x)=\frac 12 x^*\Sigma_\mu^{-1/2}(\Sigma_\mu^{1/2}\Sigma_\nu\Sigma_\mu^{1/2})^{1/2}\Sigma_\mu^{-1/2}x$. Since $$\Sigma_\mu^{-1/2}(\Sigma_\mu^{1/2}\Sigma_\nu\Sigma_\mu^{1/2})^{1/2}\Sigma_\mu^{-1/2}\Sigma_\mu\Sigma_\mu^{-1/2}(\Sigma_\mu^{1/2}\Sigma_\nu\Sigma_\mu^{1/2})^{1/2}\Sigma_\mu^{-1/2}=\Sigma_\nu,$$
 we have $T\#\cN_d(0,\Sigma_\mu)=\cN_d(0,\Sigma_\nu)$ so that $T$ is the unique $\cW_2$-optimal transport map between $\cN_d(0,\Sigma_\mu)$ and $\cN_d(0,\Sigma_\nu)$. Moreover, when $X\sim\cN_d(0,\Sigma_\mu)$ then $\left(\begin{array}{c}
 X \\ T(X)
 \end{array}\right)\sim\cN_{2d}(0,\Gamma_\Theta)$ with $\Theta=\Sigma_\mu^{1/2}(\Sigma_\mu^{1/2}\Sigma_\nu\Sigma_\mu^{1/2})^{1/2}\Sigma_\mu^{-1/2}$. 
In \cite{CARTD}, optimality of the map $T$ is obtained by remarking that when $O\in\R^{d\times d}$ is orthogonal and such that $\Sigma_\mu^{-1/2}(\Sigma_\mu^{1/2}\Sigma_\nu\Sigma_\mu^{1/2})^{1/2}\Sigma_\mu^{-1/2}=ODO^*$ for some diagonal matrix $D\in\cS^+_d$, then the distributions of $O^*X$ and $O^*T(X)=DO^*X$ share the same copula (see \cite[Theorem 2.10]{CARTD}) in which case the optimal coupling amounts to map optimally each coordinates (see \cite[Theorem 2.9]{CARTD} and also \cite[Proposition 1.1]{AlJo14}). Since the Gaussian copulas are in one to one correspondence with correlation matrices, these two Gaussian random vectors also share the same correlation matrix, which can also be deduced from the equality $ODO^*\Sigma_\mu ODO^*=\Sigma_\nu$
 It follows from \eqref{formv} that
\begin{equation*}
   \cW_2^2\left(\cN_d(m_\mu,\Sigma_\mu),\cN_d(m_\nu,\Sigma_\nu)\right)=|m_\mu-m_\nu|^2+\BW^2(\Sigma_\mu,\Sigma_\nu)\label{w2gauss}
\end{equation*}
with the so-called Bures-Wasserstein distance on $\cS^+_d$ defined by
$$\BW(\Sigma_\mu,\Sigma_\nu)=\left(\tr\left(\Sigma_\mu+\Sigma_\nu-2(\Sigma_\mu^{1/2}\Sigma_\nu\Sigma_\mu^{1/2})^{1/2}\right)\right)^{1/2}.$$ 

To our knowledge, no optimal $\Theta$ was exhibited so far when both $\Sigma_\mu$ and $\Sigma_\nu$ are singular. In the present note, we give an independent and elementary proof of the equality \eqref{formv} and exhibit some $\Theta$ optimal in \eqref{defv} even when both $\Sigma_\mu$ and $\Sigma_\nu$ are singular. We rely on the existence of $O\in\R^{d\times d}$ orthogonal such that $O^*\Sigma_\mu O$ and $O^*\Sigma_\nu O$ share the same correlation matrix even in the singular case.

{\bf Notation :} 
Let $\cO_d$ denote the subset of $\R^{d\times d}$ consisting of orthogonal matrices and $I_d\in\R^{d\times d}$ denote the identity matrix. For $\lambda_\mu,\cdots,\lambda_d\in\R$, let ${\rm diag}(\lambda_\mu,\cdots,\lambda_d)$ denote the diagonal matrix with diagonal entries $\lambda_\mu,\cdots,\lambda_d$. For $\Sigma\in \cS^+_d$, let $\textup{dg}(\Sigma)={\rm diag}(\Sigma_{11},\cdots,\Sigma_{dd})$. Note that $\textup{dg}(\Sigma)^{1/2}={\rm diag}(\sqrt{\Sigma_{11}},\cdots,\sqrt{\Sigma_{dd}})$ and, when $\prod_{i=1}^d\Sigma_{ii}>0$, $\textup{dg}(\Sigma)^{-1/2}={\rm diag}(\Sigma_{11}^{-1/2},\cdots,\Sigma_{dd}^{-1/2})$. 
 We define the set of correlation matrices by $\mathfrak{C}_d= \{ C \in \cS^+_d ; \forall i, C_{ii}=1 \}$. \begin{definition}
  A correlation matrix associated with a matrix $\Sigma\in\cS^+_d$ is a matrix $C\in\mathfrak{C}_d$ such that
$\Sigma={\textup{dg}(\Sigma)}^{1/2}C{\textup{dg}(\Sigma)}^{1/2}$. We say that two matrices $\Sigma_1,\Sigma_2 \in \cS_d^+$ share the correlation matrix~$C \in \mathfrak{C}_d$ if we have $\Sigma_i={\textup{dg}(\Sigma_i)}^{1/2}C{\textup{dg}(\Sigma_i)}^{1/2}$ for $i\in \{1,2\}$. 
\end{definition}
The matrix $C$ with all diagonal entries equal to $1$ and all non-diagonal entries equal to those of
${\rm diag}\left(\frac{1_{\{\Sigma_{11}>0\}}}{\sqrt{\Sigma_{11}}},\cdots,\frac {1_{\{\Sigma_{dd}>0\}}}{\sqrt{\Sigma_{dd}}}\right)\Sigma{\rm diag}\left(\frac{1_{\{\Sigma_{11}>0\}}}{\sqrt{\Sigma_{11}}},\cdots,\frac {1_{\{\Sigma_{dd}>0\}}}{\sqrt{\Sigma_{dd}}}\right)$ is a correlation matrix associated with $\Sigma$. When $\prod_{i=1}^d\Sigma_{ii}>0$ (condition which holds when $\Sigma$ is non singular), then $C=\textup{dg}(\Sigma)^{-1/2}\Sigma\textup{dg}(\Sigma)^{-1/2}$ and this is the only correlation matrix associated with $\Sigma$.

\begin{theorem}\label{thw2cor}Let $\Sigma_\mu,\Sigma_\nu \in S_d^+$. Then, there exists $(O,C)\in \mathcal{O}_d\times \mathfrak{C}_d$ such that $O^*\Sigma_\mu O$ and $O^*\Sigma_\nu O$ share the correlation matrix $C$. 
  Besides, for any such couple $(O,C)$,  $\Theta=O{{\rm dg}(O^*\Sigma_\mu O)}^{1/2}C{{\rm dg}(O^*\Sigma_\nu O)}^{1/2}O^*$ is optimal in \eqref{defv} and we have 
 \begin{align*}
   V(\Sigma_\mu,\Sigma_\nu)&=\tr\left(O{{\rm dg}(O^*\Sigma_\mu O)}^{1/2}C{{\rm dg}(O^*\Sigma_\nu O)}^{1/2}O^*\right)= \sum_{i=1}^d \sqrt{(O^*\Sigma_\mu O)_{ii}(O^*\Sigma_\nu O)_{ii}}\\&=\tr\left(\left(\Sigma_\mu^{1/2}\Sigma_\nu \Sigma_\mu^{1/2}\right)^{1/2}\right).
  \end{align*} 
\end{theorem}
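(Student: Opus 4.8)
The plan is to establish the three assertions in the order they appear: existence of a pair $(O,C)$, feasibility together with optimality of the exhibited $\Theta$, and finally the identification with the classical formula. I expect the existence statement to be the genuine obstacle; once $(O,C)$ is available, the optimality argument and the final reduction are essentially linear algebra.

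\textbf{Existence of $(O,C)$.} I would first treat the case where $\Sigma_\mu$ and $\Sigma_\nu$ are both positive definite. Here one can exhibit a symmetric positive definite $T$ solving $T\Sigma_\mu T=\Sigma_\nu$, namely $T=\Sigma_\mu^{-1/2}(\Sigma_\mu^{1/2}\Sigma_\nu\Sigma_\mu^{1/2})^{1/2}\Sigma_\mu^{-1/2}$, verified by the same direct computation as in the excerpt; at this stage $T$ is used merely as an algebraic device, not as an optimal map, so no circularity is introduced. Diagonalising $T=ODO^*$ with $O\in\mathcal{O}_d$ and $D$ diagonal with positive entries, the relation $T\Sigma_\mu T=\Sigma_\nu$ becomes $O^*\Sigma_\nu O=D(O^*\Sigma_\mu O)D$. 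With $A=O^*\Sigma_\mu O$ and $B=O^*\Sigma_\nu O=DAD$, the positive diagonal congruence $B=DAD$ leaves the off-diagonal correlations unchanged, so $A$ and $B$ share a common $C\in\mathfrak{C}_d$. For the general, possibly singular, case I would pass to the limit: apply the nonsingular result to $\Sigma_\mu+\epsilon I_d$ and $\Sigma_\nu+\epsilon I_d$ to get $(O^\epsilon,C^\epsilon)\in\mathcal{O}_d\times\mathfrak{C}_d$, then use compactness of $\mathcal{O}_d$ and $\mathfrak{C}_d$ to extract $O^\epsilon\to O$, $C^\epsilon\to C$ along a sequence $\epsilon\downarrow0$. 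Since $(O^\epsilon)^*(\Sigma_\mu+\epsilon I_d)O^\epsilon\to O^*\Sigma_\mu O$ (and likewise for $\nu$) and every map involved is continuous, the identity $\Sigma={\textup{dg}(\Sigma)}^{1/2}C{\textup{dg}(\Sigma)}^{1/2}$ passes to the limit for both $O^*\Sigma_\mu O$ and $O^*\Sigma_\nu O$, while closedness of $\mathfrak{C}_d$ gives $C\in\mathfrak{C}_d$. This is the step needing the most care, since diagonal entries may vanish in the limit; but for a matrix in $\cS_d^+$ a zero diagonal entry forces the whole corresponding row and column to vanish, so both sides of the correlation identity remain unaffected.

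\textbf{Optimality of $\Theta$.} Fix $(O,C)$ as above and set $A=O^*\Sigma_\mu O$, $B=O^*\Sigma_\nu O$, $D_A={\textup{dg}(A)}^{1/2}$, $D_B={\textup{dg}(B)}^{1/2}$, so $\Theta=OD_ACD_BO^*$. For feasibility I would conjugate $\Gamma_\Theta$ by the orthogonal matrix $\left(\begin{smallmatrix}O&0\\0&O\end{smallmatrix}\right)$, reducing to $\left(\begin{smallmatrix}A&D_ACD_B\\D_BCD_A&B\end{smallmatrix}\right)$; using $A=D_ACD_A$ and $B=D_BCD_B$ this factors as $\left(\begin{smallmatrix}D_A&0\\0&D_B\end{smallmatrix}\right)\left(\begin{smallmatrix}C&C\\C&C\end{smallmatrix}\right)\left(\begin{smallmatrix}D_A&0\\0&D_B\end{smallmatrix}\right)$, and $\left(\begin{smallmatrix}C&C\\C&C\end{smallmatrix}\right)\succeq0$ because its quadratic form at $(u,v)$ equals $(u+v)^*C(u+v)\ge0$; hence $\Gamma_\Theta\in\cS_{2d}^+$. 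For the upper bound I would observe that for \emph{any} $O'\in\mathcal{O}_d$ and any feasible $\Theta'$, conjugating $\Gamma_{\Theta'}$ by $\left(\begin{smallmatrix}O'&0\\0&O'\end{smallmatrix}\right)$ and restricting to the $\{i,d+i\}$ principal $2\times2$ minor forces $((O')^*\Theta'O')_{ii}^2\le((O')^*\Sigma_\mu O')_{ii}((O')^*\Sigma_\nu O')_{ii}$, whence $\tr(\Theta')=\sum_i((O')^*\Theta'O')_{ii}\le\sum_i\sqrt{((O')^*\Sigma_\mu O')_{ii}((O')^*\Sigma_\nu O')_{ii}}$. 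Taking $O'=O$ gives $V(\Sigma_\mu,\Sigma_\nu)\le\sum_i\sqrt{A_{ii}B_{ii}}$, while a direct trace computation gives $\tr(\Theta)=\tr(D_ACD_B)=\sum_iC_{ii}\sqrt{A_{ii}B_{ii}}=\sum_i\sqrt{A_{ii}B_{ii}}$; so the bound is attained, $\Theta$ is optimal, and $V(\Sigma_\mu,\Sigma_\nu)=\sum_i\sqrt{A_{ii}B_{ii}}$, the first two displayed equalities.

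\textbf{Reduction to the classical formula.} It remains to identify $\sum_i\sqrt{A_{ii}B_{ii}}$ with $\tr((\Sigma_\mu^{1/2}\Sigma_\nu\Sigma_\mu^{1/2})^{1/2})$. Since $(O^*\Sigma_\mu O)^{1/2}=O^*\Sigma_\mu^{1/2}O$, the quantity $\tr((\Sigma_\mu^{1/2}\Sigma_\nu\Sigma_\mu^{1/2})^{1/2})$ is invariant under $\Sigma\mapsto O^*\Sigma O$, hence equals $\tr((A^{1/2}BA^{1/2})^{1/2})$. Writing $D=D_AD_B={\rm diag}(\sqrt{A_{11}B_{11}},\dots,\sqrt{A_{dd}B_{dd}})$ and using $A=D_ACD_A$, $B=D_BCD_B$, cyclic invariance of the spectrum yields that $AB$ and $(CD)^2$ have the same eigenvalues with multiplicity. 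As a product of two elements of $\cS_d^+$, the matrix $CD$ has nonnegative eigenvalues, namely those of $D^{1/2}CD^{1/2}\succeq0$; since the eigenvalues of $A^{1/2}BA^{1/2}$ coincide with those of $AB$ and are therefore the squares of those of $CD$, taking nonnegative square roots recovers exactly the eigenvalues of $CD$. Summing gives $\tr((A^{1/2}BA^{1/2})^{1/2})=\tr(CD)=\sum_iC_{ii}\sqrt{A_{ii}B_{ii}}=\sum_i\sqrt{A_{ii}B_{ii}}$, closing the chain of equalities. This last step requires no nonsingularity assumption, so no further approximation is needed here.
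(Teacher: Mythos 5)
Your proposal is correct. For the existence of $(O,C)$ and the optimality of $\Theta$ you follow essentially the paper's route: the nonsingular case via diagonalisation of $\Sigma_\mu^{-1/2}(\Sigma_\mu^{1/2}\Sigma_\nu\Sigma_\mu^{1/2})^{1/2}\Sigma_\mu^{-1/2}$, the singular case by regularisation plus compactness of $\mathcal{O}_d\times\mathfrak{C}_d$ (the paper perturbs only $\Sigma_\mu$ while you perturb both matrices, an immaterial difference, and your aside about vanishing diagonal entries is not even needed, since the identity $\Sigma=\textup{dg}(\Sigma)^{1/2}C\textup{dg}(\Sigma)^{1/2}$ passes to the limit by continuity alone), then the $2\times 2$ principal-minor bound $|\Theta_{ii}|\le\sqrt{(O^*\Sigma_\mu O)_{ii}(O^*\Sigma_\nu O)_{ii}}$ and the factorisation of $\Gamma_\Theta$ through $C$ for attainment, which is exactly the paper's Step 2. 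Where you genuinely depart from the paper is in the last equality, identifying $\sum_i\sqrt{A_{ii}B_{ii}}$ with $\tr\bigl(\bigl(\Sigma_\mu^{1/2}\Sigma_\nu\Sigma_\mu^{1/2}\bigr)^{1/2}\bigr)$, where $A=O^*\Sigma_\mu O$ and $B=O^*\Sigma_\nu O$. The paper first reduces to nonsingular $A$ by invoking the approximation of the last point of Remark~\ref{rempr} together with continuity of $\cS^+_d\ni\Sigma\mapsto\Sigma^{1/2}$, and then identifies the symmetric square root explicitly through \eqref{Map_ident}. You instead argue spectrally: since $MN$ and $NM$ have the same characteristic polynomial, the eigenvalues of $A^{1/2}BA^{1/2}$ coincide (with multiplicity) with those of $AB$, hence with those of $(CD)^2$ where $D=\textup{dg}(A)^{1/2}\textup{dg}(B)^{1/2}$; the eigenvalues of $CD$ are nonnegative, being those of $D^{1/2}CD^{1/2}$, so
\begin{equation*}
\tr\Bigl(\bigl(A^{1/2}BA^{1/2}\bigr)^{1/2}\Bigr)=\tr(CD)=\sum_{i=1}^d\sqrt{A_{ii}B_{ii}}.
\end{equation*}
This treats singular and nonsingular matrices uniformly, so you need neither the $\varepsilon$-perturbation of Remark~\ref{rempr} nor the continuity of the matrix square root at this stage; your argument is self-contained here. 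What the paper's version buys in exchange is the explicit identity \eqref{Map_ident}, which it reuses after the theorem to recover the closed form $\Theta=\Sigma_\mu^{1/2}\bigl(\Sigma_\mu^{1/2}\Sigma_\nu\Sigma_\mu^{1/2}\bigr)^{1/2}\Sigma_\mu^{-1/2}$ of the unique optimizer when $\Sigma_\mu$ is nonsingular; your spectral argument yields the trace identity but not that formula.
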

In view of the above discussion of the $\cW_2$ distance between Gaussian distributions and the equality $$\forall O\in\cO_d,\;\tr\left(\Sigma_\mu+\Sigma_\nu\right)=\tr\left(O^*\Sigma_\mu O+O^*\Sigma_\nu O\right)=\sum_{i=1}^d\left((O^*\Sigma_\mu O)_{ii}+(O^*\Sigma_\nu O)_{ii}\right),$$ we deduce the following corollary. 
\begin{corollary}\label{cor}Let $m_\mu,m_\nu\in\R^d$ and $\Sigma_\mu,\Sigma_\nu \in S_d^+$. Then, there exists $(O,C)\in \mathcal{O}_d\times \mathfrak{C}_d$ such that $O^*\Sigma_\mu O$ and $O^*\Sigma_\nu O$ share the correlation matrix $C$. For any such couple, $$\cN_{2d}\left(\left(\begin{array}{c}m_\mu
   \\m_\nu
  
\end{array}\right),\left(\begin{array}{cc}
   \Sigma_\mu &OD_\mu C D_\nu O^*\\O D_\nu CD_\mu O^*&\Sigma_\nu
                         \end{array}\right)\right)$$ with $(D_\mu,D_\nu)=({\rm dg}(O^*\Sigma_\mu O)^{1/2},{\rm dg}(O^*\Sigma_\nu O)^{1/2})$ is an optimal $\cW_2$ coupling between $\cN_{d}(m_\mu,\Sigma_\mu)$ and $\cN_d(m_\nu,\Sigma_\nu)$, and
                     \begin{align*}
                       \cW_2^2\left(\cN_{d}(m_\mu,\Sigma_\mu),\cN_d(m_\nu,\Sigma_\nu)\right)&=|m_\mu-m_\nu|^2+\BW^2(\Sigma_\mu,\Sigma_\nu)\mbox{ with }\\\BW^2(\Sigma_\mu,\Sigma_\nu)&=\sum_{i=1}^d \left(\sqrt{(O^*\Sigma_\mu O)_{ii}}-\sqrt{(O^*\Sigma_\nu O)_{ii}}\right)^2.\end{align*}
\end{corollary}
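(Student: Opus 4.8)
The existence of a pair $(O,C)\in\cO_d\times\mathfrak{C}_d$ for which $O^*\Sigma_\mu O$ and $O^*\Sigma_\nu O$ share the correlation matrix $C$ is the first assertion of Theorem~\ref{thw2cor}, so the plan is to take it as given and to reduce everything else to that theorem together with the bias--variance identity \eqref{w22}. Fixing such a pair, I would set $D_\mu={\rm dg}(O^*\Sigma_\mu O)^{1/2}$, $D_\nu={\rm dg}(O^*\Sigma_\nu O)^{1/2}$ and $\Theta=OD_\mu CD_\nu O^*$. Because $C$ is symmetric and $D_\mu,D_\nu$ are diagonal, one has $\Theta^*=OD_\nu CD_\mu O^*$, so the $2d\times 2d$ matrix displayed in the corollary is exactly $\Gamma_\Theta$ for this $\Theta$. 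Theorem~\ref{thw2cor} asserts that this $\Theta$ is optimal in \eqref{defv}; in particular it is feasible, meaning $\Gamma_\Theta\in\cS^+_{2d}$, and $\tr(\Theta)=V(\Sigma_\mu,\Sigma_\nu)$.

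The first step is to note that, since $\Gamma_\Theta\in\cS^+_{2d}$, the law $\cN_{2d}\big((m_\mu,m_\nu)^*,\Gamma_\Theta\big)$ is a well-defined Gaussian measure whose diagonal blocks identify its marginals as $\cN_d(m_\mu,\Sigma_\mu)$ and $\cN_d(m_\nu,\Sigma_\nu)$; it is therefore an admissible coupling. I would then evaluate its cost directly via \eqref{w22}, obtaining $|m_\mu-m_\nu|^2+\tr(\Sigma_\mu+\Sigma_\nu-2\Theta)=|m_\mu-m_\nu|^2+\tr(\Sigma_\mu+\Sigma_\nu)-2V(\Sigma_\mu,\Sigma_\nu)$. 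By the identity for $\cW_2^2$ between two Gaussians established just before \eqref{defv}, this value is precisely $\cW_2^2\big(\cN_d(m_\mu,\Sigma_\mu),\cN_d(m_\nu,\Sigma_\nu)\big)$, so the exhibited coupling attains the infimum and is optimal.

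It remains to rewrite the quadratic part in the announced diagonal form. Here I would combine the trace identity recorded just before the corollary, $\tr(\Sigma_\mu+\Sigma_\nu)=\sum_{i=1}^d\big((O^*\Sigma_\mu O)_{ii}+(O^*\Sigma_\nu O)_{ii}\big)$, with the expression $V(\Sigma_\mu,\Sigma_\nu)=\sum_{i=1}^d\sqrt{(O^*\Sigma_\mu O)_{ii}(O^*\Sigma_\nu O)_{ii}}$ from Theorem~\ref{thw2cor}. Subtracting, the $i$-th summand of $\tr(\Sigma_\mu+\Sigma_\nu)-2V(\Sigma_\mu,\Sigma_\nu)$ collapses to the perfect square $\big(\sqrt{(O^*\Sigma_\mu O)_{ii}}-\sqrt{(O^*\Sigma_\nu O)_{ii}}\big)^2$, which gives the stated formula for $\BW^2(\Sigma_\mu,\Sigma_\nu)$ and hence for $\cW_2^2$.

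Since Theorem~\ref{thw2cor} supplies all the analytic content, I do not expect a genuine difficulty: the argument is essentially bookkeeping. The two points deserving care are purely formal, namely verifying that the block matrix in the statement coincides with $\Gamma_\Theta$ for the optimal $\Theta$ (the transpose computation above) and invoking feasibility, $\Gamma_\Theta\in\cS^+_{2d}$, to guarantee that the candidate Gaussian coupling is well defined. Optimality among all couplings is then immediate, because the coupling's cost equals $\cW_2^2$ itself.
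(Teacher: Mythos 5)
Your proposal is correct and follows essentially the same route as the paper: the authors also deduce the corollary directly from Theorem~\ref{thw2cor} (existence of $(O,C)$, feasibility and optimality of $\Theta=OD_\mu CD_\nu O^*$, and the value of $V(\Sigma_\mu,\Sigma_\nu)$), combined with the Gaussian-coupling identity $\cW_2^2=|m_\mu-m_\nu|^2+\tr(\Sigma_\mu+\Sigma_\nu)-2V(\Sigma_\mu,\Sigma_\nu)$ established before \eqref{defv} and the invariance of $\tr(\Sigma_\mu+\Sigma_\nu)$ under conjugation by $O$. Your bookkeeping (the transpose check identifying the block matrix with $\Gamma_\Theta$ and the collapse to a sum of perfect squares) is exactly the intended argument.
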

\begin{remark}
  \begin{itemize}
   \item The covariance matrix of the above optimal coupling also can be written $\left(\begin{array}{c}
   O{\rm dg}(O^*\Sigma_\mu O)^{1/2}\\O{{\rm dg}(O^*\Sigma_\nu O)}^{1/2}
   \end{array}\right)C\left(\begin{array}{c}
   O{{\rm dg}(O^*\Sigma_\mu O)}^{1/2}\\O{{\rm dg}(O^*\Sigma_\nu O)}^{1/2}\end{array}\right)^*$ so that when $Z\sim\cN_d(0,C)$, then $\left(\begin{array}{c}m_\mu
   \\m_\nu
  
\end{array}\right)+\left(\begin{array}{c}
   O{{\rm dg}(O^*\Sigma_\mu O)}^{1/2}\\O{{\rm dg}(O^*\Sigma_\nu O)}^{1/2}\end{array}\right)Z$ is distributed according this coupling. 
   \item When $X\sim \mathcal{N}_d(0,\Sigma_\mu)$, $Y\sim \mathcal{N}_d(0,\Sigma_\nu)$ are $\cW_2$-optimally coupled and $O\in\cO_d$, then, by the Cauchy-Schwarz inequality, 
   \begin{align*}
      \BW^2(\Sigma_\mu,\Sigma_\nu)&=\E[|X-Y|^2]=\E[|O^*X-O^*Y|^2]\\&=\sum_{i=1}^d\left(\E[(O^*X)_i^2]-2\E[(O^*X)_i(O^*Y)_i]+\E[(O^*Y)_i^2]\right)\\&\ge \sum_{i=1}^d\left(\sqrt{\E[(O^*X)_i^2]}-\sqrt{\E[(O^*Y)_i^2]}\right)^2\\&=\sum_{i=1}^d\left(\sqrt{(O^*\Sigma_\mu O)_{ii}}-\sqrt{(O^*\Sigma_\nu O)_{ii}}\right)^2
   \end{align*}
 and the equality holds if and only if for all $i\in\{1,\cdots,d\}$, there exists $\delta_i\in\R$ such that $(O^*X)_i=\delta_i (O^*Y)_i$ or $(O^*Y)_i=0$ a.s., i.e. if and only if  $O^*\Sigma_\mu O$ and $O^*\Sigma_\nu O$ share the same correlation matrix.  
 Therefore, the matrices $O\in\cO_d$ that  maximise the right-hand side and achieve the equality are precisely those  such that $O^*\Sigma_\mu O$ and $O^*\Sigma_\nu O$ share the same correlation matrix.    \item Assume that $\Sigma_1,\Sigma_2 \in S_d^+$ share the correlation matrix $C$. Then, for $\varepsilon \in(0,1)$, the symmetric matrices \begin{align*}
  \Sigma_i^{(\varepsilon)}=(\textup{dg}(\Sigma_i)+\varepsilon I_d)^{1/2}(\varepsilon I_d+(1-\varepsilon)C) (\textup{dg}(\Sigma_i)+\varepsilon I_d)^{1/2}, \ i=1,2,
\end{align*}  
are positive definite, share the correlation matrix $\varepsilon I_d+(1-\varepsilon)C$ and are such that $\Sigma_i^{(\varepsilon)}\to_{\varepsilon \to 0} \Sigma_i$.             
   \end{itemize}
   \label{rempr}\end{remark}\begin{proof}[Proof of Theorem \ref{thw2cor}]
   {\bf Step 1: existence of $O\in\cO_d$ such that $O^*\Sigma_\mu O$ and $O^*\Sigma_\nu O$ share the same correlation matrix.}
When $\Sigma_\mu$ is invertible, then the existence follows from the above discussion of the results in \cite{CARTD} and the orthogonal matrix~$O$ is obtained from the diagonalisation of $\Sigma_\mu^{-1/2}(\Sigma_\mu^{1/2}\Sigma_\nu\Sigma_\mu^{1/2})^{1/2}\Sigma_\mu^{-1/2}$.

When $\Sigma_\mu$ is singular, we consider a decreasing sequence $(\varepsilon_n)$ of real numbers such that $\varepsilon_n\to 0$ as $n\to\infty$. Then, there exists $C_n\in \mathfrak{C}_d$ and $O_n \in \cO_d$ such that 
\begin{align*}
   O_n^*(\Sigma_\mu+\varepsilon_nI_d)O_n&={\rm dg}(O_n^*(\Sigma_\mu+\varepsilon_nI_d)O_n)^{1/2}C_n{\rm dg}(O_n^*(\Sigma_\mu+\varepsilon_nI_d)O_n)^{1/2},\\O_n^*\Sigma_\nu O_n&={\rm dg}(O_n^*\Sigma_\nu O_n)^{1/2}C_n{\rm dg}(O_n^*\Sigma_\nu O_n)^{1/2}.
\end{align*}
By compactness of $\mathfrak{C}_d$ and $\cO_d$, we can extract a subsequence of $(C_n,O_n)$ that converges to  $(C,O)\in \mathfrak{C}_d\times \cO_d$, and we get \begin{align}
   O^* \Sigma_\mu O&={\rm dg}(O^*\Sigma_\mu O)^{1/2}C{\rm dg}(O^*\Sigma_\mu O)^{1/2},\;O^* \Sigma_\nu O={\rm dg}(O^*\Sigma_\nu O)^{1/2}C{\rm dg}(O^*\Sigma_\nu O)^{1/2} .\label{eqsamcor}\end{align}
 {\bf Step 2: derivation of $V(\Sigma_\mu,\Sigma_\nu)$ under \eqref{eqsamcor}.}
 Since the matrix $\Gamma_\Theta$ in \eqref{defv} belongs to $\cS^+_{2d}$ if and only if so does $\left(\begin{array}{cc}O^*\Sigma_\mu O& O^*\Theta O\\O^*\Theta^* O& O^*\Sigma_{\nu} O\end{array}\right)$, and by the cyclic property of the trace, $\tr(O^*\Theta O)=\tr(\Theta)$, we have $V(\Sigma_\mu,\Sigma_\nu)=V(O^*\Sigma_\mu O,O^*\Sigma_\nu O)$ and $\Theta$ is maximal for the optimization problem defining $V(\Sigma_\mu,\Sigma_\nu)$ in \eqref{defv} if and only if $O^*\Theta O$ is maximal for the one defining $V(O^*\Sigma_\mu O,O^*\Sigma_\nu O)$. \begin{equation}
   \mbox{For }\Sigma\in\cS^+_d,\;
   O^*\Sigma^{1/2}OO^*\Sigma^{1/2}O=O^*\Sigma O\mbox{ so that }O^*\Sigma^{1/2}O=(O^*\Sigma O)^{1/2}.\label{os}
 \end{equation} We deduce that \begin{align}\left((O^*\Sigma_\mu O)^{1/2}O^*\Sigma_\nu O(O^*\Sigma_\mu O)^{1/2}\right)^{1/2}=\left(O^*\Sigma_\mu^{1/2}\Sigma_\nu \Sigma_\mu ^{1/2}O\right)^{1/2}=O^*\left(\Sigma_\mu^{1/2}\Sigma_\nu \Sigma_\mu ^{1/2}\right)^{1/2}O,\label{raco}  
 \end{align}
                           so that, by the cyclic property of the trace,
                           \begin{equation*}
                              \tr\left(\left((O^*\Sigma_\mu O)^{1/2}O^*\Sigma_\nu O(O^*\Sigma_\mu O)^{1/2}\right)^{1/2}\right)=\tr\left(\left(\Sigma_\mu^{1/2}\Sigma_\nu \Sigma_\mu ^{1/2}\right)^{1/2}\right).
                           \end{equation*}
                           Hence, setting $\tilde\Sigma_\mu=O^*\Sigma_\mu O$ and $\tilde\Sigma_\nu=O^*\Sigma_\nu O$ for notational simplicity, it is enough to check that
                           \begin{align}
                              V(\tilde\Sigma_\mu,\tilde\Sigma_\nu)&=\sum_{i=1}^d\sqrt{(\tilde\Sigma_\mu)_{ii}(\tilde\Sigma_\nu)_{ii}}=\tr\left(\left(\tilde\Sigma_\mu^{1/2}\tilde\Sigma_\nu\tilde\Sigma_\mu^{1/2}\right)^{1/2}\right),\label{deosig}
                           \end{align}
with maximality of ${\rm dg}(\tilde \Sigma_\mu)^{1/2}C{\rm dg}(\tilde \Sigma_\nu)^{1/2}$ for the optimization problem defining $V(\tilde\Sigma_\mu,\tilde\Sigma_\nu)$. When $\Theta\in\R^{d\times d}$ is such that $\Gamma=\left(\begin{array}{cc} \tilde \Sigma_\mu & \Theta\\\Theta^* &  \tilde \Sigma_\nu \end{array}\right)$ belongs to $\cS^+_{2d}$, then $|\Theta_{ii}|\le \sqrt{(\tilde\Sigma_\mu)_{ii}(\tilde\Sigma_\nu)_{ii}}$ for each $i\in\{1,\cdots,d\}$ so that $V(\tilde\Sigma_\mu,\tilde\Sigma_\nu)\le \sum_{i=1}^d\sqrt{(\tilde\Sigma_\mu)_{ii}(\tilde\Sigma_\nu)_{ii}}$. On the other hand, this upper bound is attained for $\Theta={{\rm dg}(\tilde \Sigma_\mu)}^{1/2}C{{\rm dg}(\tilde \Sigma_\nu)}^{1/2}$ : we obviously have $\tr(\Theta)=  \sum_{i=1}^d\sqrt{(\tilde\Sigma_\mu)_{ii}(\tilde\Sigma_\nu)_{ii}}$ and $\Gamma=\left(\begin{array}{c}
    {{\rm dg}(\tilde \Sigma_\mu)}^{1/2}\\{{\rm dg}(\tilde \Sigma_\nu)}^{1/2}
    \end{array}\right)C\left(\begin{array}{c}
    {{\rm dg}(\tilde \Sigma_\mu)}^{1/2}\\{{\rm dg}(\tilde \Sigma_\nu)}^{1/2}
                             \end{array}\right)^*$, which shows that $\Gamma$ is well positive semidefinite. 
In view of the last point in Remark~\ref{rempr} and the continuity of $\cS^+_{d}\ni \Sigma\mapsto \Sigma^{1/2}\in\cS^+_{d}$, it is enough to check the second equality in~\eqref{deosig} when $\tilde \Sigma_\mu$ is non singular to cover the general case. We have  \begin{align*}
    (\tilde\Sigma_\mu^{1/2} {\rm dg}(\tilde\Sigma_\nu)^{1/2}{\rm dg}(\tilde\Sigma_\mu)^{-1/2}\tilde\Sigma_\mu^{1/2})^2&=\tilde\Sigma_\mu^{1/2} {\rm dg}(\tilde\Sigma_\nu)^{1/2}{\rm dg}(\tilde\Sigma_\mu)^{-1/2}\tilde\Sigma_\mu{\rm dg}(\tilde\Sigma_\mu)^{-1/2}{\rm dg}(\tilde\Sigma_\nu)^{1/2}\tilde\Sigma_\mu^{1/2} \notag\\
    &=\tilde\Sigma_\mu^{1/2} {\rm dg}(\tilde\Sigma_\nu)^{1/2}C {\rm dg}(\tilde\Sigma_\nu)^{1/2}\tilde\Sigma_\mu^{1/2} = \tilde\Sigma_\mu^{1/2} \tilde\Sigma_\nu \tilde\Sigma_\mu^{1/2},
   \end{align*}
     so that \begin{equation}
       \tilde\Sigma_\mu^{1/2} {\rm dg}(\tilde\Sigma_\nu)^{1/2}{\rm dg}(\tilde\Sigma_\mu)^{-1/2}\tilde\Sigma_\mu^{1/2}=\left(\tilde\Sigma_\mu^{1/2} \tilde\Sigma_\nu \tilde\Sigma_\mu^{1/2}\right)^{1/2}   \label{Map_ident}                              \end{equation}
     and, by the cyclic property of the trace, $${\rm tr}\left(\left(\tilde\Sigma_\mu^{1/2} \tilde\Sigma_\nu \tilde\Sigma_\mu^{1/2}\right)^{1/2}\right)={\rm tr}\left({\rm dg}(\tilde\Sigma_\nu)^{1/2}{\rm dg}(\tilde\Sigma_\mu)^{-1/2}\tilde\Sigma_\mu\right)=\sum_{i=1}^d\sqrt{(\tilde\Sigma_\mu)_{ii}(\tilde\Sigma_\nu)_{ii}}.$$\end{proof}
\begin{remark}\begin{itemize}
  \item When $\Sigma_\mu$ is non singular and \eqref{eqsamcor} holds, \eqref{os} and \eqref{Map_ident} imply that
    $$\Sigma_\mu^{1/2}O{\rm dg}(O^*\Sigma_\mu O)^{-1/2}{\rm dg}(O^*\Sigma_\nu O)^{1/2}O^*\Sigma_\mu^{1/2}=\left(\Sigma_\mu^{1/2}\Sigma_\nu\Sigma_\mu^{1/2}\right)^{1/2}.$$
 By left-multiplication by $\Sigma_\mu^{1/2}$, right-multiplication by $\Sigma_\mu^{-1/2}$ and using $\Sigma_\mu=O{\rm dg}(O^*\Sigma_\mu O)^{1/2} C{\rm dg}(O^*\Sigma_\mu O)^{1/2}O^*$, we deduce that
 $$O{\rm dg}(O^*\Sigma_\mu O)^{1/2} C{\rm dg}(O^*\Sigma_\nu O)^{1/2}O^*=\Sigma_\mu^{1/2}\left(\Sigma_\mu^{1/2}\Sigma_\nu\Sigma_\mu^{1/2}\right)^{1/2}\Sigma_\mu^{-1/2}.$$
 Moreover, maximality of $\Theta$ in \eqref{defv} implies that $(O^*\Theta O)_{ii}=\sqrt{(O^*\Sigma_\mu O)_{ii}(O^*\Sigma_\nu O)_{ii}}$ for $i\in\{1,\cdots,d\}$ so that when $\left(\begin{array}{c}X
   \\Y
 \end{array}\right)\sim\cN_{2d}\left(0,\left(\begin{array}{cc} O^*\Sigma_\mu O
 & O^*\Theta O  \\  O^*\Theta^* O & O^*\Sigma_\nu O  
 \end{array}\right)\right)$, then $Y_i=\sqrt{\frac{(O^*\Sigma_\nu O)_{ii}}{(O^*\Sigma_\mu O)_{ii}}}X_i$ for $i\in\{1,\cdots,d\}$. As a consequence for $i,j\in\{1,\cdots,d\}$, $(O^*\Theta O)_{ij}=\E[X_iY_j]=\sqrt{\frac{(O^*\Sigma_\nu O)_{jj}}{(O^*\Sigma_\mu O)_{jj}}}\E[X_iX_j]=\sqrt{(O^*\Sigma_\mu O)_{ii}}C_{ij}\sqrt{(O^*\Sigma_\nu O)_{jj}}$, so that $\Theta=O{\rm dg}(O^*\Sigma_\mu O)^{1/2} C{\rm dg}(O^*\Sigma_\nu O)^{1/2}O^*=\Sigma_\mu^{1/2}\left(\Sigma_\mu^{1/2}\Sigma_\nu\Sigma_\mu^{1/2}\right)^{1/2}\Sigma_\mu^{-1/2}$. 
In a symmetric way, when $\Sigma_\nu$ is non singular, the unique $\Theta$ maximal in \eqref{defv} is $\Sigma_\nu^{-1/2}(\Sigma_\nu^{1/2}\Sigma_\mu\Sigma_\nu^{1/2})^{1/2}\Sigma_\nu^{1/2}$. 
\item When both $\Sigma_\mu$ and $\Sigma_\nu$ are singular, uniqueness of $\Theta$ maximal in \eqref{defv} may fail as seen from the example $\Sigma_\mu=\left(\begin{array}{cc} 1 & 0
   \\ 0 & 0
\end{array}\right)$ and $\Sigma_\nu=\left(\begin{array}{cc} 0 & 0
   \\ 0 & 1
\end{array}\right)$, where for each $\rho\in [-1,1]$, $\Theta=\left(\begin{array}{cc} 0 & \rho
   \\ 0 & 0
                                                                    \end{array}\right)$ is maximal. Moreover non Gaussian couplings may also be $\cW_2$-optimal between Gaussian laws like the distribution of $(X,0,0,\varepsilon X)^*$ where $X\sim\cN_1(0,1)$ and $\varepsilon$ is an independent Rademacher random variable (${\mathbb P}(\varepsilon=-1)={\mathbb P}(\varepsilon=1)=\frac 12$)
                                                                  .
\end{itemize}
\end{remark}

\noindent{\bf Wasserstein barycenters.} We end up this note by making the connection with Wasserstein barycenters. Let $\Sigma_\mu,\Sigma_\nu \in \mathcal{S}_d^+$ and, by Theorem~\ref{thw2cor}, $(O,C)\in \mathcal{O}_d\times \mathfrak{C}_d$ such that $O^*\Sigma_\mu O$ and $O^*\Sigma_\nu O$ share the correlation matrix $C$. Then,  for each $\alpha\in[0,1]$, $$\eta_\alpha=\cN_{d}(\alpha m_\mu+(1-\alpha)m_\nu,O(\alpha D_\mu+(1-\alpha)D_\nu) C(\alpha D_\mu+(1-\alpha)D_\nu)O^*)$$ is a Wasserstein barycenter (see \cite{AgCa11}) of $\cN_{d}(m_\mu,\Sigma_\mu)$ and $\cN_d(m_\nu,\Sigma_\nu)$ with respective weights $\alpha$ and $1-\alpha$ i.e.
minimizes $$\alpha\cW_2^2(\eta,\cN_{d}(m_\mu,\Sigma_\mu))+(1-\alpha)\cW_2^2(\eta,\cN_d(m_\nu,\Sigma_\nu))$$ over $\eta\in\cP_2(\R^d)$. 
Indeed, we check easily by Corollary~\ref{cor} that $\cW_2^2(\eta,\cN_{d}(m_\mu,\Sigma_\mu))=(1-\alpha)^2\cW_2^2(\cN_{d}(m_\mu,\Sigma_\mu),\cN_{d}(m_\nu,\Sigma_\nu))$, $\cW_2^2(\eta,\cN_{d}(m_\nu,\Sigma_\nu))=\alpha^2\cW_2^2(\cN_{d}(m_\mu,\Sigma_\mu),\cN_{d}(m_\nu,\Sigma_\nu))$, so that  
$$\alpha\cW_2^2(\eta,\cN_{d}(m_\mu,\Sigma_\mu))+(1-\alpha)\cW_2^2(\eta,\cN_d(m_\nu,\Sigma_\nu))=\alpha(1-\alpha)\cW_2^2(\cN_{d}(m_\mu,\Sigma_\mu),\cN_d(m_\nu,\Sigma_\nu)).$$ On the other hand, for  $\eta(dz)k_z(dx)$ and $\eta(dz)\tilde k_z(dy)$ optimal $\cW_2$ couplings between $\eta$ and respectively $\cN_{d}(m_\mu,\Sigma_\mu)$ and $\cN_d(m_\nu,\Sigma_\nu)$,
\begin{align*}
\alpha&\cW_2^2(\eta,\cN_{d}(m_\mu,\Sigma_\mu))+(1-\alpha)\cW_2^2(\eta,\cN_d(m_\nu,\Sigma_\nu))\\&\ge \int_{\R^{3d}}\left(\alpha|x-z|^2+(1-\alpha)|y-z|^2\right)\eta(dz)k_z(dx)\tilde k_z(dy)\\&\ge \int_{\R^{3d}}\left(\alpha|x-(\alpha x+(1-\alpha)y)|^2+(1-\alpha)|y-(\alpha x+(1-\alpha)y)|^2\right)\eta(dz)k_z(dx)\tilde k_z(dy)\\&\ge \int_{\R^{3d}}\alpha(1-\alpha)|x-y|^2\eta(dz)k_z(dx)\tilde k_z(dy)\\&\ge \alpha(1-\alpha)\cW_2^2(\cN_{d}(m_\mu,\Sigma_\mu),\cN_d(m_\nu,\Sigma_\nu)).
\end{align*}

We now present the extension to $n\ge 2$ Gaussian distributions. Let $m_1,\dots,m_n\in \R^d$ and  $\Sigma_1,\dots,\Sigma_n\in \cS_d^+$ such that $O^*\Sigma_1O,\dots,O^*\Sigma_nO$ share the same correlation matrix~$C$ so that $O^*\Sigma_iO=D_i C D_i$ with $D_i$ a diagonal matrix for $i\in\{1,\cdots,n\}$. Then, 
$$\mathcal{N}_d\left( \sum_{i=1}^n p_i m_i, O\left( \sum_{i=1}^n p_i D_i \right)C\left( \sum_{i=1}^n p_i D_i \right)O^* \right)$$
is a Wasserstein barycenter of $(\mathcal{N}_d(m_i,\Sigma_i))_{1\le i \le n}$ with weights $(p_1,\cdots,p_n)\in[0,1]^n$ such that $\sum_{i=1}^n p_i=1$.
This follows from the general result that when $\mu^1,\cdots,\mu^n\in\cP_2(\R^d)$  share the same copula ${\bf C}$, then $$\eta_\star=\left(\sum_{i=1}^np_iF_{i1}^{-1}(u_1),\cdots,\sum_{i=1}^np_iF_{id}^{-1}(u_d)\right)\# {\bf C}(du_1,\cdots,du_d)$$ where $F_{ij}^{-1}$ denotes the quantile function of the $j$-th marginal $\mu^i_j$ of $\mu^i$  is a Wasserstein barycenter of $\mu^1,\cdots,\mu^n$ with weights $p^1,\cdots,p^n$. Indeed for $\eta\in\cP_2(\R^d)$ with marginals $\eta_1,\cdots,\eta_d$, using \cite{AgCa11} for the first equality,
\begin{align*}
\sum_{i=1}^n p_i\cW_2^2(\eta,\mu^i)&\ge \sum_{j=1}^d\sum_{i=1}^n p_i\cW_2^2(\eta_j,\mu^i_j)=\sum_{j=1}^d\sum_{i=1}^n p_i\int_0^1\left(F_{ij}^{-1}(u_j)-\sum_{k=1}^np_kF_{kj}^{-1}(u_j)\right)^2du_j\\
&=\sum_{i=1}^n p_i \int_{[0,1]^d}\sum_{j=1}^d\left(F_{ij}^{-1}(u_j)-\sum_{k=1}^np_kF_{kj}^{-1}(u_j)\right)^2 {\bf C}(du_1,\cdots,du_d)\\&\ge \sum_{i=1}^n p_i\cW_2^2(\eta_\star,\mu^i).
\end{align*}

\bibliographystyle{abbrv}
\bibliography{joint_biblio}

\begin{thebibliography}{1}

\bibitem{AgCa11}
M.~Agueh and G.~Carlier.
\newblock Barycenters in the {W}asserstein space.
\newblock {\em SIAM J. Math. Anal.}, 43(2):904--924, 2011.

\bibitem{AlJo14}
A.~Alfonsi and B.~Jourdain.
\newblock A remark on the optimal transport between two probability measures sharing the same copula.
\newblock {\em Statist. Probab. Lett.}, 84:131--134, 2014.

\bibitem{CARTD}
J.~A. Cuesta-Albertos, L.~R\"{u}schendorf, and A.~Tuero-D\'{\i}az.
\newblock Optimal coupling of multivariate distributions and stochastic processes.
\newblock {\em J. Multivariate Anal.}, 46(2):335--361, 1993.

\bibitem{Dowlan}
D.~C. Dowson and B.~V. Landau.
\newblock The {F}r\'echet distance between multivariate normal distributions.
\newblock {\em J. Multivariate Anal.}, 12(3):450--455, 1982.

\bibitem{Gi}
N.~Gigli.
\newblock On the inverse implication of {B}renier-{M}c{C}ann theorems and the structure of {$({\mathcal P}_2(M),W_2)$}.
\newblock {\em Methods Appl. Anal.}, 18(2):127--158, 2011.

\bibitem{GiSho}
C.~R. Givens and R.~M. Shortt.
\newblock A class of {W}asserstein metrics for probability distributions.
\newblock {\em Michigan Math. J.}, 31(2):231--240, 1984.

\bibitem{Olpuk}
I.~Olkin and F.~Pukelsheim.
\newblock The distance between two random vectors with given dispersion matrices.
\newblock {\em Linear Algebra Appl.}, 48:257--263, 1982.

\bibitem{RaRu98}
S.~Rachev and L.~R{\"u}schendorf.
\newblock {\em Mass transportation problems. {V}ol. {I}}.
\newblock Probability and its Applications (New York). Springer-Verlag, New York, 1998.

\end{thebibliography}
\end{document}